\newcommand{\inte}{\operatorname*{int}}
\newcommand{\aff}{\operatorname*{aff}}
\newcommand{\bd}{\operatorname*{bd}}
\newcommand{\rint}{\operatorname*{rint}}
\newcommand{\Rt}{\mathbb{R}^3}
\newcommand{\Rn}{\mathbb{R}^{n}}
\newtheorem{lemma}{Lemma}
\newtheorem{theorem}{Theorem}
\newtheorem{definition}{Definition}
\newtheorem{conjecture}{Conjecture}
\newtheorem{question}{Question}
\title {Convex bodies with sections with hyperplanes of symmetry}
\author{Efr\'en Morales-Amaya}
\address{Facultad de Matem\'aticas-Acapulco,
Universidad Aut\'onoma de Guerrero, M\'exico}
\email{emoralesamaya@gmail.com}
\begin{document}

\maketitle
\begin{abstract}  
               Let $K\subset \Rn$ be a convex body  and let $p\in \inte K$, $n \geq 3$. The 
               point $p$ is said to be a \textit{Larman point} of $K$ if, for every hyperplane 
               $\Pi$ passing through $p$, the section $\Pi\cap K$ has a $(n-2)$-plane of 
               symmetry. If, in addition, for every hyperplane $\Pi$ passing through $p$, the 
               section $\Pi\cap K$ has a $(n-2)$-plane of symmetry which contains $p$, 
               then the point $p$ is called a revolution point. In this work we prove that if 
               for the convex body $K$, $n \geq 3$, there exists a 
               hyperplane $H$, a point $p$ such that $p$ is a Larman point of $K$ but 
               not a revolution point and, for every hyperplane $\Pi$ passing though $p$, 
               the section $\Pi \cap K$ has an $(n-2)$-plane of symmetry parallel to $H$, 
               then $K$ is an ellipsoid of revolution with an axis perpendicular to $H$. 
\end{abstract}
\section{Introduction}
In this work, we study Geometric Tomography problems in which we are given information about the symmetries of sections of a convex body $K \subset \Rn$, $n\geq 3$, and want to obtain 
information about the symmetries of $K$. 
\begin{question}\label{prob3} 
What can we say about a convex body $K\subset \Rn$, $n \geq 3$, with the property that there exists a point $p\in \Rn$ such that all hyperplane sections of $K$ passing through $p$ possess a certain type of symmetry? 
\end{question}
A particularly case of Question \ref{prob3} occurs when all the sections of $K$ through the point $p$ are assumed to be centrally symmetric, but $p$ is not the center of symmetry of $K$. This problem is known as the False Centre Theorem  of Aitchison-Petty-Rogers and Larman. The conjecture was confirmed in \cite{ai-pe-ro} in the case when the false centre is an interior point of $K$. The False Centre Theorem was proven in all its generality in \cite{la}. In \cite{mm2} was given a short proof of the False Centre Theorem which used a characterization of centrally symmetric body in the plane \cite{ro2} and a characterization of the ellipsoid in terms of planar shadow boundaries (see theorem 16.14 in \cite{bu}). 

On the other hand, K. Bezdek formulated the following conjecture in which the sections have axial symmetry. 

\begin{conjecture}\cite[pg. 221]{blnp}\label{bez}
If all plane sections of a convex body $K \subset \mathbb{R}^3$ have an axis of symmetry, then $K$ is an ellipsoid or a body of revolution.
\end{conjecture}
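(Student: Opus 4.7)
The plan is to set up coordinates so that $H=\{x_n=0\}$ is horizontal, and after a horizontal translation $p=(0,\dots,0,p_n)$ lies on the $x_n$-axis $\ell$. I will prove the theorem in four steps: (i) $K$ admits a horizontal hyperplane of symmetry; (ii) $K$ is a body of revolution about a vertical axis $\ell^{*}\ne\ell$; (iii) every hyperplane section of $K$ through $p$ is centrally symmetric; (iv) the False Centre Theorem of Aitchison--Petty--Rogers--Larman then forces $K$ to be an ellipsoid, hence an ellipsoid of revolution with axis $\ell^{*}\perp H$.

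Step (i). Take any vertical hyperplane $\Pi\supset\ell$. The $(n-2)$-plane of symmetry $L_\Pi\subset\Pi$ is parallel to $H$, so $L_\Pi=\Pi\cap\{x_n=h_\Pi\}$; because $\Pi$ is vertical, the reflection of $\Pi$ across $L_\Pi$ coincides with the restriction to $\Pi$ of the Euclidean reflection $R_{h_\Pi}\colon(x_1,\dots,x_n)\mapsto(x_1,\dots,x_{n-1},\,2h_\Pi-x_n)$. This reflection must preserve the vertical chord $\ell\cap K$, forcing $h_\Pi$ to equal the midpoint $h$ of that chord, independently of $\Pi$. Since every point of $K$ belongs to some vertical hyperplane through $\ell$, the global reflection $R_h$ preserves $K$.

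Step (ii), the crux. For a non-vertical hyperplane $\Pi$ through $p$ with unit normal $(\hat a,c)$, a direct computation shows the reflection within $\Pi$ across $L_\Pi$ sends $(\hat x,x_n)$ to $\bigl(\hat x+\tfrac{2c}{|\hat a|^{2}}(x_n-h_\Pi)\hat a,\,2h_\Pi-x_n\bigr)$, a vertical flip about $h_\Pi$ coupled with a horizontal shear. Each such planar reflection extends to the rotation by $\pi$ of $\Rn$ about the $(n-2)$-plane $L_\Pi$, which preserves $\Pi$ and restricts inside it to the required reflection. Composing these rotations with $R_h$ produces a family of global affine transformations preserving at least $\Pi\cap K$; by letting $\Pi$ vary through the hyperplanes containing $p$ with fixed horizontal normal direction $\hat a$ (varying $c$) and then over all $\hat a$, the resulting composites generate rotations about a common vertical axis $\ell^{*}$, forcing $K$ to be invariant under the full group of rotations about $\ell^{*}$. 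The non-revolution hypothesis rules out $\ell^{*}=\ell$: if $\ell^{*}=\ell$ then every $L_\Pi$ would contain $p$, making $p$ a revolution point.

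Steps (iii)--(iv). With $K$ a body of revolution about $\ell^{*}$ and symmetric across $\{x_n=h\}$, $K$ is centrally symmetric about $o:=\ell^{*}\cap\{x_n=h\}$. For any $\Pi$ through $p$ the section $\Pi\cap K$ now carries two independent families of symmetries: (a) the hypothesized horizontal $L_\Pi$, and (b) an $O(n-2)$-family of $(n-2)$-plane symmetries $\Pi\cap M$, where $M$ ranges over the vertical hyperplanes containing $\ell^{*}$ whose horizontal normal is orthogonal to $\hat a$. A short computation shows the common fixed axis of this $O(n-2)$-action lies in $\Pi$ and is orthogonal in $\Rn$ to $L_\Pi$; hence $\Pi\cap K$ is a body of revolution inside $\Pi$ with a perpendicular equatorial $(n-2)$-plane of symmetry, and is therefore centrally symmetric. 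Since $p\notin\ell^{*}$ implies $p\ne o$, the False Centre Theorem yields that $K$ is an ellipsoid, which together with the revolution structure about $\ell^{*}\perp H$ completes the proof. The principal obstacle is step (ii): the axial reflections of the sections through $p$ are not rigid motions of $\Rn$, and converting them into a global rotational symmetry of $K$ requires carefully matching the heights $h_\Pi$, the direction of $\hat a$, and the shear amount as $\Pi$ varies, while using both $R_h$ from step (i) and the non-revolution hypothesis to pin down $\ell^{*}$ off the line $\ell$.
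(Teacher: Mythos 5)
The statement you were asked about is Bezdek's Conjecture (Conjecture \ref{bez}), which the paper does \emph{not} prove: it is quoted as an open problem, and the paper's Theorem \ref{patitasricas} establishes only a special case under substantial extra hypotheses. Your proposal does not prove the conjecture either, because from the very first line you import those extra hypotheses: a distinguished hyperplane $H$, a distinguished point $p$ that is a Larman point but not a revolution point, and the assumption that the symmetry axes of all sections through $p$ are parallel to $H$. None of this is available under the conjecture's hypothesis, which only says that each plane section of $K\subset\mathbb{R}^3$ has \emph{some} axis of symmetry, in an unknown and section-dependent direction; there is no common hyperplane $H$ to which these axes are parallel, and no distinguished point $p$. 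A quick sanity check confirms the mismatch: every plane section of an arbitrary body of revolution has an axis of symmetry, so such a body satisfies the conjecture's hypothesis, yet your argument concludes that $K$ is an ellipsoid of revolution. Since a generic body of revolution is not an ellipsoid, the argument cannot be a correct proof of the conjecture as stated (whose conclusion is the disjunction ``ellipsoid \emph{or} body of revolution'' precisely for this reason).

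Even read as a blind proof of Theorem \ref{patitasricas} (whose hypotheses you are in effect using), Step (ii) contains the essential gap. The axial symmetries of the sections $\Pi\cap K$ are reflections \emph{within} each $\Pi$; they are not global isometries of $\mathbb{R}^n$ preserving $K$, and the claim that composing them with $R_h$ ``generates rotations about a common vertical axis $\ell^{*}$'' is asserted, not proved. This passage from section-wise symmetries to a global rotational symmetry is exactly where the paper's work lies: it first rules out boundary segments parallel to $H$ (Lemmas \ref{alexia} and \ref{aroma}, using the Larman--Rogers theorem on directions of boundary segments), then shows via an iterated reflection construction alternating between $p$ and $S(p)$ that every shadow boundary in a direction parallel to $H$ is planar (Lemma \ref{yosiledoy}), and only then deduces that horizontal sections are spheres (Lemma \ref{suave}). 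Your Steps (iii)--(iv) do match the paper's endgame (two orthogonal symmetry axes in each section through $p$, hence central symmetry, hence the False Centre Theorem), but they rest on the unproved Step (ii).
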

In \cite{mo}, Montejano gave an example showing that considering only sections through a fixed point is not enough. Indeed, in $\mathbb{R}^3$, the convex hull of two perpendicular discs centered at the origin has the property that every section through the origin has an axis of symmetry. 
\begin{definition}
A point $p\in \inte K$ is said to be a \textit{Larman point} of $K$ if   for every hyperplane $\Pi$ passing through $p$ 
the section $\Pi\cap K$ has a $(n-2)$-plane of symmetry. 
\end{definition}
\begin{definition}
Let $p \in \inte K$ be a Larman point of $K$. We call $p$ a {\it revolution point} of $K$ if for every hyperplane $\Pi$ passing through $p$ 
the section $\Pi\cap K$ has a $(n-2)$-plane of symmetry which contains $p$.
\end{definition}

 As examples of Larman and revolution points, we note that if $c$ is the centre of an ellipsoid $E\subset \Rn$, which is not a body of revolution,  then $c$ is a revolution point of $E$.
 Furthermore, every point $p \neq c$ in the interior of the ellipsoid is a Larman point, but not a revolution point.  On the other hand, every point on the axis of a body of revolution  is a revolution point, while every point $p$ not on the axis is a Larman point. These facts were proved in \cite{alfonseca} (Corollary 2, page 13). Another results related with the notion of revolution point of a convex body were given in \cite{gmm}.

 With this terminology, we state the following refinement of Conjecture \ref{bez}.
\begin{conjecture}\label{amaya}
Let $K\subset \Rn$ be a convex body, $n\geq 3$. Suppose that $p \in \inte K$ is a Larman  point of $K$ which is not  a revolution point of $K$. Then either $K$ is an ellipsoid or $K$ is a body of revolution.  
\end{conjecture}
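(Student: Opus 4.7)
My plan is a dichotomy based on the directions of the $(n-2)$-planes of symmetry of the sections through $p$. For each hyperplane $\Pi$ through $p$, the Larman hypothesis supplies an $(n-2)$-plane of symmetry $L_\Pi\subset\Pi$ of $\Pi\cap K$; let $V_\Pi$ denote its $(n-2)$-dimensional direction subspace. The guiding question is whether there exists an $(n-1)$-dimensional linear subspace $W_0\subset\Rn$ with $V_\Pi\subset W_0$ for every $\Pi$.

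Case A: If such a $W_0$ exists, fix $H$ to be the hyperplane with direction $W_0$. Then every $L_\Pi$ is parallel to $H$, and the main result of the present paper (stated in the abstract) applies and yields that $K$ is an ellipsoid of revolution with axis perpendicular to $H$; in particular $K$ is a body of revolution, as required.

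Case B: If no such $W_0$ exists, the $V_\Pi$'s span $\Rn$. The strategy is to upgrade the axial symmetry of every section through $p$ to a central symmetry and then invoke the False Centre Theorem \cite{ai-pe-ro,la}. A first observation is that $p$ cannot be the centre of $K$: in a centrally symmetric convex body every hyperplane of symmetry passes through the centre (composing with the central symmetry would otherwise iterate to a non-trivial translation, contradicting boundedness), so applied inside each $\Pi$ this shows that if $p$ were the centre one would have $p\in L_\Pi$ for every $\Pi$, making $p$ a revolution point against the hypothesis. To then produce a second $(n-2)$-plane of symmetry $L'_\Pi$ of $\Pi\cap K$ with direction different from $V_\Pi$, one exploits the variability of the $V_{\Pi'}$'s for $\Pi'$ near $\Pi$: restricting the reflection $\sigma_{\Pi'}$ to the common codimension-two intersection $\Pi\cap\Pi'\cap K$ and letting $\Pi'$ sweep a transversal family, the resulting partial symmetries should assemble into a full reflection of $\Pi\cap K$ not equal to $\sigma_\Pi$. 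The composition of the two reflections is a non-trivial rotation of $\Pi$ preserving $\Pi\cap K$, and a continuous family of such rotations forces $\Pi\cap K$ to be centrally symmetric. The False Centre Theorem then yields that $K$ is an ellipsoid, completing the dichotomy.

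The main obstacle is the construction in Case B of a second $(n-2)$-plane of symmetry of $\Pi\cap K$ genuinely distinct from $L_\Pi$. The Larman hypothesis furnishes only one $L_\Pi$ per section, so the additional symmetry must be manufactured from symmetries of neighbouring sections via a restriction-and-extension procedure; controlling the restriction so that it acts non-trivially on $\Pi\cap\Pi'\cap K$ and, more seriously, controlling the extension so that it yields a genuine reflection of $\Pi\cap K$ rather than of some lower-dimensional subsection, is delicate. This is where the difficulty of Conjecture \ref{amaya} (and of the ambient Bezdek Conjecture \ref{bez}) is concentrated.
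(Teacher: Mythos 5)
You are attempting to prove a statement that the paper itself records only as Conjecture \ref{amaya}: the paper offers no proof of it, and proves only the special case (Theorem \ref{patitasricas}) in which all the $(n-2)$-planes of symmetry of the sections through $p$ are parallel to one fixed hyperplane $H$ and $p$ does not lie in a hyperplane of symmetry of $K$ parallel to $H$. Your Case A is exactly an appeal to that theorem, but even there the reduction is incomplete: Theorem \ref{patitasricas} carries the additional hypothesis that $p$ is not contained in a hyperplane of symmetry of $K$ parallel to $H$ (the zig-zag construction in Lemma \ref{yosiledoy} needs $S(p)\neq p$, where $S$ is the reflection in the hyperplane of symmetry supplied by Lemma \ref{planosimetria}), so in Case A you must still dispose of the subcase in which $p$ lies in that hyperplane of symmetry. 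You do not address it.

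The decisive problem is Case B, and you concede it yourself: the ``restriction-and-extension'' procedure that is supposed to manufacture a second $(n-2)$-plane of symmetry of $\Pi\cap K$ is never carried out, and no argument is given that the partial symmetries induced on the codimension-two slices $\Pi\cap\Pi'\cap K$ cohere into a single reflection of all of $\Pi\cap K$. Since this is precisely where the difficulty of Conjecture \ref{amaya} (and of Conjecture \ref{bez}) is concentrated, the proposal is a plan rather than a proof. Moreover, even granting a second plane of symmetry $L'_\Pi\neq L_\Pi$, your concluding step is wrong as stated: two distinct $(n-2)$-planes of symmetry of a convex body do not force central symmetry (an equilateral triangle has three lines of symmetry and no centre), and a finite dihedral group of symmetries is all you get unless the angle is controlled. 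One needs the two reflections to be perpendicular, so that their composition is the point reflection, or to generate an infinite rotation group. This is exactly why the paper, in proving Theorem \ref{patitasricas}, first establishes that $K$ is a body of revolution with axis orthogonal to $H$ and only then extracts, for each section through $p$, a line of symmetry \emph{orthogonal} to the one parallel to $H$ before invoking the False Centre Theorem. Your observation that $p$ cannot be the centre of $K$ is correct and agrees with the paper's remark following Conjecture \ref{amaya}, but it repairs neither gap.
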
  

Observe that if a Larman point $p$ is also the center of symmetry of $K$, then $p$ is a revolution point of $K$. Hence, Montejano's example of the convex hull of two discs is now excluded by the assumption that the Larman point $p$ is not a revolution point. 

In \cite{alfonseca} were proved a serie of interesting results where the notions of Larman point and revolution point were considered. 
In order to state some of them, we need the following definition. We say that a line $L$ is an \textit{axis of symmetry} of $K$ if, on the one hand, all sections of  $K$ by hyperplanes orthogonal to $L$ are centrally symmetric with center at a point in $L$, and on the other hand, all sections of $K$ by hyperplanes containing  $L$ have $L$ as a line of symmetry (\textit{i.e.}, given a hyperplane $H$, for every point $x\in K \cap H$, its reflection with respect to $L$ is also in $K\cap H$). Between the most important results proved in \cite{alfonseca} we have:

\begin{itemize}
\item [(I)] \textit{If $K\subset \Rt$ is a centrally symmetric strictly convex body with centre at $o$,  $K$ has two distinct revolution points $p,q$ such that $p\not= o\not= q$ and $o \notin L(p,q)$, then $K$ is a sphere}.  

\item [(II)] \textit{If $K\subset \mathbb{R}^3$ is an origin symmetric, strictly convex body, $L$ is an axis of symmetry of $K$, $p\in (\inte K )\setminus L$ is a Larman point of $K$ and, for every plane $\Pi$ passing through $p$, the section $\Pi \cap K$ has a line of symmetry which contains the point $\Pi\cap L$, then $K$ is a body of revolution with axis $L$}.
\end{itemize}
 
Our main result is the following theorem which represent a progress in the understanding of Conjecture \ref{amaya} and it was motivated by the result (II) above:

\begin{theorem}\label{patitasricas}
              Let $K\subset \Rn$ be a convex body, let $H$ be a 
              hyperplane and let $p\in \inte K$, $n\geq 3$. Suppose that $p$
              is a Larman point but not revolution point of $K$ and $p$ is 
              not in a hyperplane of symmetry of $K$ parallel to $H$ and, furthermore,
              for every hyperplane  $\Pi$ passing through $p$, the section 
              $\Pi \cap K$ has a $(n-2)$-plane of symmetry parallel to $H$.  
              Then $K$ is an ellipsoid of revolution with an axis of revolution 
              perpendicular to $H$. 
              \end{theorem}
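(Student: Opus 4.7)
My plan is as follows. Place coordinates on $\Rn$ so that $H=\{x_n=0\}$, and write a point as $(\bar x,x_n)$ with $\bar x\in H$. Call a hyperplane \emph{vertical} if it contains the direction $e_n$, and \emph{horizontal} if it is parallel to $H$. For each hyperplane $\Pi$ through $p$, write $L(\Pi)\subset\Pi$ for the horizontal $(n-2)$-plane of symmetry of $\Pi\cap K$ supplied by the hypothesis, and $\sigma_\Pi$ for the reflection of $\Pi$ about $L(\Pi)$. The argument will go in four steps: (i) extract a horizontal hyperplane of symmetry of $K$ from the vertical hyperplanes through $p$; (ii) show that every hyperplane section through $p$ is centrally symmetric; (iii) apply the False Centre Theorem to conclude $K$ is an ellipsoid; (iv) use the horizontality of the Larman axes to deduce that $K$ is an ellipsoid of revolution with axis perpendicular to $H$.

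\textbf{Step (i).} Let $V_p=\{p+te_n:t\in\RR\}\cap K$ and let $h^*$ be the $x_n$-coordinate of its midpoint. For any vertical hyperplane $\Pi$ through $p$, the unique direction of $\Pi$ perpendicular to the $(n-2)$-dimensional horizontal $L(\Pi)$ is $e_n$, so $\sigma_\Pi$ acts on $\Pi$ as the vertical flip about $L(\Pi)$. Since $V_p\subset\Pi$ is preserved by $\sigma_\Pi$, $L(\Pi)$ must pass through the midpoint of $V_p$; thus $L(\Pi)=\Pi\cap\{x_n=h^*\}$ and $\sigma_\Pi$ is the restriction to $\Pi$ of the global reflection $\rho\colon(\bar x,x_n)\mapsto(\bar x,2h^*-x_n)$. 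As $\Pi$ ranges over all vertical hyperplanes through $p$, whose union covers $\Rn$, we obtain $\rho(K)=K$; so $S:=\{x_n=h^*\}$ is a hyperplane of symmetry of $K$. Since $p\notin S$ by hypothesis, $p_n\ne h^*$; after a vertical translation, we may assume $h^*=0$ and $p_n>0$.

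\textbf{Step (ii), the crux.} For each hyperplane $\Pi$ through $p$ we aim to produce a second $(n-2)$-plane of symmetry of $\Pi\cap K$, perpendicular in $\Pi$ to $L(\Pi)$; the composition of the two reflections is then the central inversion of $\Pi$ about their intersection, forcing $\Pi\cap K$ to be centrally symmetric. The difficulty is that, for slanted $\Pi$ (neither horizontal nor vertical), the Larman plane $L(\Pi)$ sits at an a priori unknown height $z_\Pi$ depending on $\Pi$, and so $\sigma_\Pi$ does not extend directly to a global symmetry of $K$. The plan is to combine $\sigma_\Pi$ with the global reflection $\rho$ from Step (i) and with the Larman reflections of companion slanted hyperplanes $\Pi'$ through $p$ sharing the horizontal $(n-2)$-plane $\Pi\cap S$. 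The hypothesis that $p$ is not a revolution point enters essentially here: it guarantees that the Larman plane of at least one such companion $\Pi'$ misses $p$, so that the construction yields a reflection symmetry of $\Pi\cap K$ genuinely different from $\sigma_\Pi$, rather than collapsing to the identity.

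\textbf{Steps (iii)--(iv).} Once every section of $K$ through $p$ is centrally symmetric, and since the centre of $K$ (if any) must lie in $S\not\ni p$, the False Centre Theorem of Aitchison--Petty--Rogers and Larman forces $K$ to be an ellipsoid. Because $S$ is a hyperplane of symmetry of the ellipsoid $K$, the direction $e_n$ is a principal axis of $K$; writing $A$ for the defining quadratic form in principal coordinates, the hypothesis that the Larman $(n-2)$-plane of every section through $p$ is horizontal translates, for each slanted $\Pi$, into the requirement that $\Pi\cap H$ be an eigenspace of the restriction of $A$ to $\Pi$'s direction. As $\Pi$ varies among slanted hyperplanes through $p$, the subspace $\Pi\cap H$ exhausts all $(n-2)$-dimensional subspaces of $H$, which forces $A|_H$ to be a scalar multiple of the identity. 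Hence all principal semi-axes of $K$ in $H$-directions coincide, and $K$ is an ellipsoid of revolution with axis perpendicular to $H$. The main obstacle is Step (ii): producing a second reflection axis of each section through $p$, with the ``not a revolution point'' hypothesis serving as the essential lever that prevents the combined symmetry from collapsing trivially.
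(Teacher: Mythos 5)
Your Step (i) is correct and coincides with the paper's Lemma \ref{planosimetria}, and your Steps (iii)--(iv) match the endgame of the paper's argument (False Centre Theorem, then the revolution structure). But Step (ii), which you yourself call ``the crux'' and ``the main obstacle,'' is not a proof: you announce a plan to combine $\sigma_\Pi$, $\rho$, and the Larman reflections of companion hyperplanes, but you never exhibit the second $(n-2)$-plane of symmetry of a slanted section $\Pi\cap K$, nor any mechanism that pins down the unknown height $z_\Pi$. Note also that $\rho$ does not preserve a slanted $\Pi$, so $\rho$ cannot be composed with $\sigma_\Pi$ inside $\Pi$; any such composition necessarily carries you to the companion hyperplane through $S(p)=\rho(p)$, and controlling the resulting infinite back-and-forth is exactly the analytic content that is missing. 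The ``not a revolution point'' hypothesis is invoked only as a slogan, with no concrete role.

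The paper closes this gap in a different order: it does not obtain central symmetry of the sections directly. Instead it runs the ping-pong between $p$ and $S(p)$ at the level of supporting $(n-2)$-planes parallel to $H$ (Lemma \ref{yosiledoy}), showing that the iterated images $\Gamma_s$ converge to the two supporting $(n-2)$-planes in the vertical hyperplane through $p$, whence every segment-free shadow boundary $S\partial(K,\Gamma)$ with $\Gamma\subset H$ lies in a $2$-plane orthogonal to $\Gamma$. Combined with the Larman--Rogers theorem on directions of boundary segments (Lemmas \ref{alexia} and \ref{aroma}), which rules out boundary segments parallel to $H$ altogether, and with Lemma \ref{suave}, this yields that $K$ is a body of revolution with axis perpendicular to $H$. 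Only then does each section through $p$ acquire its second, perpendicular line of symmetry (coming from a plane of symmetry of the revolution body orthogonal to the section), after which the False Centre Theorem applies. Your proposal contains none of this machinery, so the central claim that every section through $p$ is centrally symmetric remains unproved; the reduction of the general case to $n=3$ is likewise not addressed, though your outline is at least formally dimension-free.
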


              If we compare the result (II) with the Theorem \ref{patitasricas}, then in the 
              Theorem \ref{patitasricas} we assume that the line $L$ is a line at the 
              infinite in $H$, however, we do not assume the strictly convexity and no one 
              (global) symmetry of $K$, i.e., we do not assume in Theorem \ref{patitasricas}
              neither the existence of a center of symmetry nor an axis of symmetry.

              Our \textit{line of attack} for the proof of the Theorem \ref{patitasricas} 
              is different 
              from the one 
              used in the Theorem (II). In fact, our proof follows, in general terms, the 
              strategy for proving the False Centre Theorem given in \cite{ai-pe-ro} 
              and \cite{la}. Let us briefly review this 
              strategy in dimension 3: (1)  In \cite{ro1}, Rogers proved that if the body  
              $K$ has a false centre, then it  must be centrally symmetric (thus we 
              conclude that, if we choose a system of coordinates with the origin at the 
              center of the body, the point $-p$ is also a false centre of the body $K$). 
              (2) It is shown that every section of $K$, given by a plane containing 
              $L(p,-p)$, is contained in shadow boundary, for some 
              direction $u $ parallel to a support plane of $K$ at a point in 
              $\bd K \cap L(p,-p)$. (3) The point 2) is used to prove that $K$ is a body 
              of revolution with axis the line $L(p,-p)$. (4) It is shown that the section 
              which determines the body of revolution is an ellipse.

              Our strategy for proving the Theorem \ref{patitasricas} in dimension 3 is as 
              follows: (1) It is 
              shown that $K$ has a plane of symmetry $H'$ parallel to $H$. (2) Since one of 
              the conditions of the Theorem \ref{patitasricas} is that the Larman point 
              $p$ does not lie in 
              a plane of symmetry of $K$ parallel to $H$, $p$ does not belong to $H'$. 
              Thus the image $q$ of $p$ under a 
              reflection with respect to $H'$ is different from $p$ and is also a Larman 
              point and, for every plane $\Pi$ containing $q$, the section $\Pi \cap K$ 
              has a line of symmetry parallel to $H$. (3) we show that there are no line 
              segments contained in $\bd K$ and parallel to $H$, (4) for every direction 
              $u$ parallel to $H$, we show that the shadow boundary of $K$ 
              corresponding to $u$ is contained in a plane perpendicular to $u$. (5) 
              using (4) we show that $K$ is a body of revolution with axis orthogonal to 
              $H$. (6) It is shown that every section of $K$ through $p$ has two 
              perpendicular lines of symmetry and is therefore centrally symmetric (it is 
               important to observe here that every section of a body of revolution has a 
               line of symmetry which intersects the axis of revolution). By 
              virtue of the False Centre Theorem, $K$ has center at $p$ or $K$ is an 
              ellipsoid, however, $p$ cannot be the center of $K$ because in that case 
              $p$ should lie in the plane of symmetry of $K$ but that would contradict our 
              choice of $p$.
\section{Basic notions and some auxiliary results}
Let $\mathbb{R}^n$ be the $n$-dimensional Euclidean space endowed with the standard inner product $\langle \cdot, \cdot \rangle : \mathbb{R}^n \times \mathbb{R}^n \to \mathbb{R}$. We choose an orthonormal system of coordinates $(x_1, \dots, x_n)$ for $\mathbb{R}^n$. Define the closed unit $n$-ball by
	\[
	B(n) = \{x \in \mathbb{R}^n : \|x\| \leq 1\}, \quad S^{n-1} = \{x \in \mathbb{R}^n : \|x\| = 1\}
	\]
	as its boundary. Let $x, y \in \mathbb{R}^n$. Denote by $L(x, y)$ the line through $x$ and $y$, and by $[x, y]$ the line segment connecting them. A {\it chord} of a convex body $K$ is any line segment $[x, y]$ in $K$ such that $a,b\in \bd K$. A {\it body} in $\mathbb{R}^n$ is a compact set which is equal to the closure of its nonempty interior.  A {\it convex body} is a body $K$ such that for every pair of points  $a,b\in K$ the line segment $[a,b]$ is contained in $K$. A convex body is {\it strictly convex} if its boundary does not contain a line segment. For each unit vector $\xi \in \mathbb{R}^n$, a chord parallel to $\xi$ of maximal length is called a {\it diametral chord} of $K$. For sets $A, B \subset \mathbb{R}^n$, let $\mathrm{aff}\{A, B\}$ be the affine hull of $A \cup B$.  
 	
	 A set $C\subset \Rn$ is \emph {symmetric} if and only 
               if $C=-C$.  Moreover, we say that $C$ is \emph{centrally symmetric} 
               if there is a symmetric translated copy $C+w$ of $C$; that is, if  $C$ and
               $-C$ are translated copies of each other.
\begin{figure}[H]
    \centering
     \includegraphics [width=.8\textwidth]{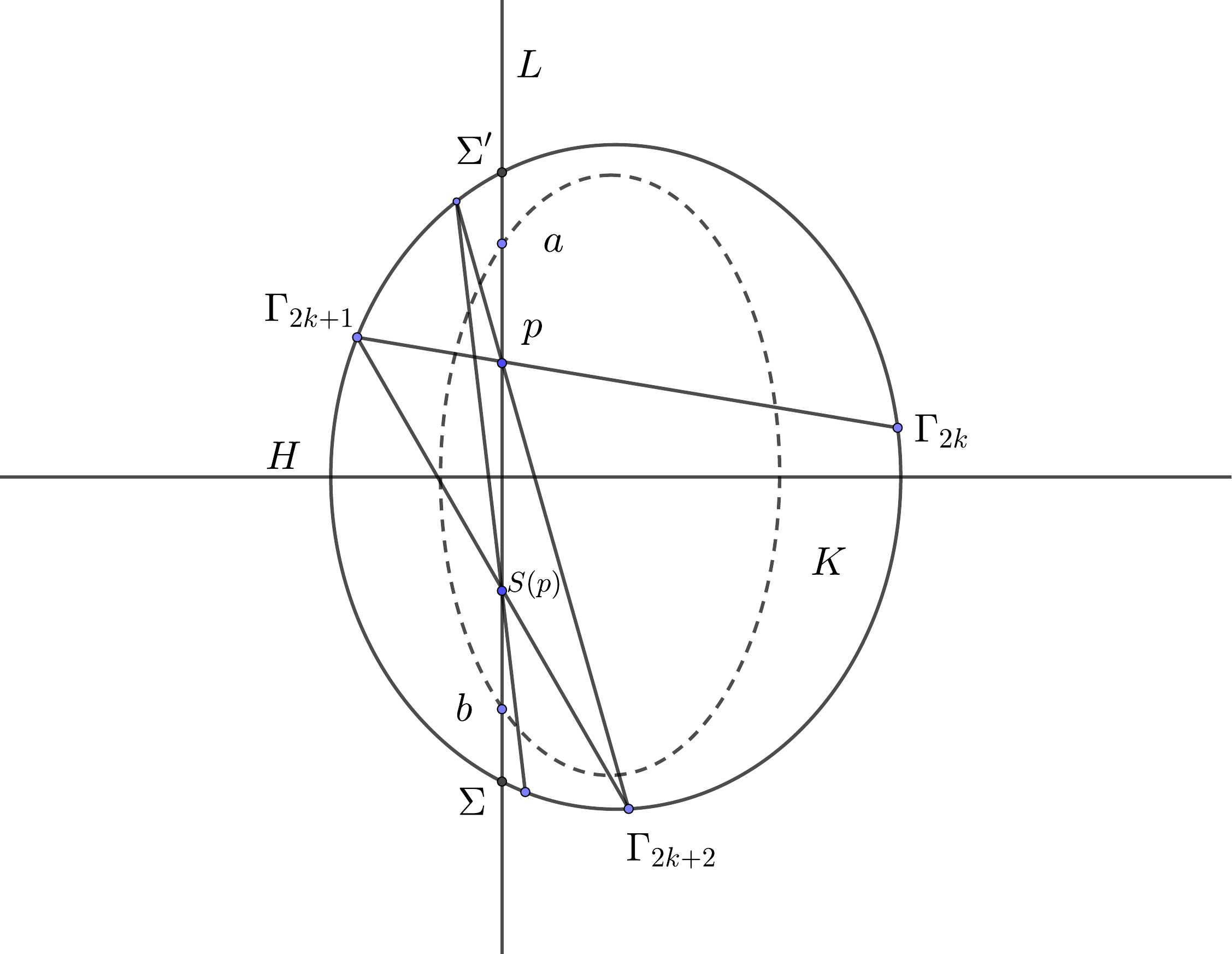}
    \caption{The orthogonal projection $K$ defined by $\Gamma$.}
    \label{sensual}
\end{figure}                
Let $H \subset \mathbb{R}^n$ be a hyperplane. A mapping $S : \mathbb{R}^n \to \mathbb{R}^n$ is a \emph{reflection} with respect to $H$ if, for every point $x \in \mathbb{R}^n$, the point $S(x)$ lies on the line orthogonal to $H$ through $x$, at equal distance from $H$, and on the opposite side of $H$ from $x$. A convex body $K \subset \mathbb{R}^n$ is said to be \emph{symmetric} with respect to $S$ if $S(K) = K$.
                 If $\Gamma$ is a $k$-plane, $1\leq  k \leq n-1$, then the 
                shadow boundary $S\partial(K,\Gamma)$ of $K$ with respect 
                to $\Gamma$ is the set in $\bd K$, defined by
\[
                S\partial(K,\Gamma)= \{\Lambda \cap K : \Lambda \textrm{ is 
                supporting } k\textrm{-plane of } K \textrm{ parallel to } 
                \Gamma\}.
\]
                 The shadow boundary $S\partial(K,\Gamma)$ of $K$ with 
                respect to $\Gamma$ is said to be a segment free if there is 
                not a line segment $I\subset \bd K$ contained in 
                $S\partial(K,\Gamma)$ parallel to $\Gamma$.
\begin{lemma}\label{planosimetria}
              The body $K$ has an hyperplane of symmetry parallel to $H$. 
\end{lemma}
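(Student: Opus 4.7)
The plan is to identify the candidate hyperplane of symmetry $H'$ by intersecting the given local symmetries with a single transversal line, and then verify that reflection of $\mathbb{R}^n$ about $H'$ preserves $K$ by reducing everything to the symmetries of the sections $\Pi\cap K$ through hyperplanes $\Pi$ that contain this transversal.

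Concretely, I would let $L_p$ denote the line through $p$ orthogonal to $H$, and let $m$ be the midpoint of the chord $L_p\cap K$ (which exists since $p\in\inte K$). Define $H'$ to be the hyperplane through $m$ parallel to $H$; this is the candidate. To show $H'$ is a hyperplane of symmetry, I would restrict attention to hyperplanes $\Pi$ through $p$ containing $L_p$. By hypothesis each section $\Pi\cap K$ admits an $(n-2)$-plane of symmetry $\Gamma_\Pi\subset \Pi$ parallel to $H$. The key observation is that the reflection of $\Pi\cap K$ about $\Gamma_\Pi$ preserves the chord $L_p\cap K\subset \Pi\cap K$; since this reflection flips $L_p$ about the unique point $\Gamma_\Pi\cap L_p$, this point must coincide with the midpoint $m$, independently of $\Pi$. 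A dimension count (using that $\Pi$ contains $L_p$ and is therefore not parallel to $H$) shows that the directions of $\Pi$ and of $H$ intersect in an $(n-2)$-dimensional subspace $D$, which is forced to be the direction of $\Gamma_\Pi$; the same $D$ is the direction of $H'\cap \Pi$, and both $\Gamma_\Pi$ and $H'\cap \Pi$ pass through $m$, so $\Gamma_\Pi=H'\cap \Pi$.

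With this identification at hand, the global symmetry follows directly. Denote by $S$ the reflection of $\mathbb{R}^n$ about $H'$. If $x\in K\cap L_p$, then $S(x)\in L_p\cap K$ since $L_p\cap K$ is symmetric about its midpoint $m$. If $x\in K\setminus L_p$, then $\aff\{L_p,x\}$ is a $2$-plane which, since $n\geq 3$, can be extended to some hyperplane $\Pi$ through $p$ containing both $L_p$ and $x$. On $\Pi$, the restriction of $S$ coincides with the reflection of $\Pi$ about $\Gamma_\Pi=H'\cap \Pi$, because both fix $\Gamma_\Pi$ pointwise and invert the common orthogonal direction (the direction of $L_p$) inside $\Pi$. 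Consequently $S(x)\in \Pi\cap K\subset K$, as required.

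The only delicate point is the identification $\Gamma_\Pi=H'\cap \Pi$, and it rests on the fact that a single chord $L_p\cap K$, shared by all sections $\Pi\cap K$ with $L_p\subset \Pi$, forces all the symmetry planes $\Gamma_\Pi$ to pass through the common midpoint $m$; everything else is a direct assembly of local reflections into a global one. Notice that the hypotheses that $p$ be a Larman but not revolution point, and that $p$ avoid hyperplanes of symmetry of $K$ parallel to $H$, are not needed for this lemma; they will intervene only in later stages of the proof of Theorem~\ref{patitasricas}.
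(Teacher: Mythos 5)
Your proof is correct and follows essentially the same approach as the paper: both take the line through $p$ orthogonal to $H$, observe that every section through a hyperplane containing this line has its $(n-2)$-plane of symmetry passing through the midpoint of the orthogonal chord and lying in the hyperplane $H'$ through that midpoint parallel to $H$, and conclude that these local reflections assemble into the global symmetry. Your write-up merely makes explicit the identification $\Gamma_\Pi = H'\cap\Pi$ and the final assembly step, which the paper leaves implicit.
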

\begin{proof}
             Let $L$ be a line perpendicular to $H$ and passing through $p$. For every 
             hyperplane $\Pi$ such that $L\subset \Pi$, 
             $\Pi \cap K$ has a $(n-2)$-plane of symmetry $L_{\Pi}$ parallel 
             to $H$. Then, since $L$ is orthogonal to $H$, $L_{\Pi}$ is 
             perpendicular to $L$ and it is passing through the mid-point of 
             the chord $L\cap K$. Consequently, the family of $(n-2)$-planes 
             $L_{\Pi}$, $L \subset \Pi$, determine the hyperplane of 
             symmetry of $K$. 
\end{proof}
               We denote by  $S : \mathbb{R}^n \to \mathbb{R}^n$ the reflection with 
               respect to $H$. By virtue of Lemma \ref{planosimetria} we can assume 
               that the body $K$ is symmetric with respect $H$. In this section we will 
               assume that $p$ is not in the hyperplane of symmetry $H$ of $K$. Thus 
               $p\not=S(p)$. Notice that, by virtue of the symmetry of $K$ with respect 
               to $H$, $S(p)$ is a Larman point, i.e., every hyperplane $\Gamma$ 
               passing through $S(p)$ has an $(n-2)$-plane of symmetry but, in addition, 
               we assume that such $(n-2)$-plane is parallel to $H$.

                Let $d:=n-2$. We are going to prove 
                that a shadow boundary of $K$ which correspond to 
                $d$-plane parallel to $H$ and is segment free is contained 
                in a 2-plane.
\begin{lemma}\label{yosiledoy}
                 Let $\Gamma \subset H$ be a $d$-plane. Suppose that 
                 $S\partial(K, \Gamma)$ is segment free shadow boundaries of $K$.Then 
                 $S\partial(K,\Gamma)$ is contained in a 2-plane $\Delta$, orthogonal to 
                 $\Gamma$. 
         \end{lemma}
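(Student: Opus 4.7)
The plan is to project $K$ orthogonally along $\Gamma$ onto the $2$-plane $E:=\Gamma^\perp$; let $\pi:\mathbb{R}^n\to E$ denote this projection and set $K':=\pi(K)$. Choosing coordinates so that $H=\{x_n=0\}$ and $\Gamma=\{x_{n-1}=x_n=0\}$, $E$ becomes the $(x_{n-1},x_n)$-plane. Since $S\partial(K,\Gamma)$ is segment free, each fibre $\pi^{-1}(y)\cap K$ with $y\in\partial K'$ collapses to a single point $\gamma(y)\in\bd K$, giving a homeomorphism $\gamma:\partial K'\to S\partial(K,\Gamma)$. Writing $g(y):=\bigl(\gamma(y)_1,\dots,\gamma(y)_{n-2}\bigr)\in\mathbb{R}^{n-2}$, the lemma reduces to showing that $g$ is constant; in that case $S\partial(K,\Gamma)$ is contained in the $2$-plane $\{x_1=g_1,\dots,x_{n-2}=g_{n-2}\}$, which is orthogonal to $\Gamma$.

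The first step is to derive chord-partner invariances of $g$. Let $\Pi$ be any hyperplane through $p$ whose direction contains that of $\Gamma$ and differs from the direction of $H$. Then $\Pi\cap H$ is a translate of $\Gamma$, so the Larman axis $L_\Pi$—an $(n-2)$-plane of $\Pi$ parallel to $H$—is forced to be a translate of $\Gamma$ as well. In $\Pi$ the shadow boundary of $\Pi\cap K$ with respect to $\Gamma$ consists of exactly two points (segment-freeness transfers from $K$ to $\Pi\cap K$), and these are exchanged by the reflection across $L_\Pi$, which preserves every coordinate $x_1,\dots,x_{n-2}$. Projecting to $E$ turns these two points into the endpoints of the chord $\pi(\Pi)\cap K'$ through $p':=\pi(p)$, so $g\circ\phi_{p'}=g$ on $\partial K'$, where $\phi_{p'}$ is the chord-partner involution through $p'$; the lone exceptional hyperplane (the one through $p$ parallel to $H$) contributes only the chord of $K'$ along the $x_{n-1}$-direction and is absorbed by continuity of $g$. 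Applying the same argument at $S(p)$ gives $g\circ\phi_{q'}=g$ for $q':=\pi(S(p))\neq p'$. Finally, Lemma~\ref{planosimetria} produces the reflection $S$ of $K$ across $H$; its restriction $\sigma:=S|_E$ is the reflection of $E$ across $H\cap E$, preserves $K'$, and fixes every coordinate $x_1,\dots,x_{n-2}$, so $g\circ\sigma=g$ and $\phi_{q'}=\sigma\phi_{p'}\sigma$.

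To deduce constancy of $g$ I pass to a $1$-dimensional dynamical argument on $\partial K'\cong S^1$. Set $R:=\sigma\circ\phi_{p'}$, an orientation-reversing self-homeomorphism of $\partial K'$ with $R^2=\phi_{q'}\circ\phi_{p'}=:T$ and $g\circ R=g$. The equation $R(y)=y$ is equivalent to $\phi_{p'}(y)=\sigma(y)$, i.e.\ the chord joining $y$ and $\sigma(y)$ (which is perpendicular to $H\cap E$) passes through $p'$; equivalently, $y$ lies on the line $L(p',q')$. Because $p'$ is interior to $K'$, this line cannot lie in a flat segment of $\partial K'$, so it meets $\partial K'$ in precisely two points $F_1,F_2$, which are swapped by $\sigma$; hence $g(F_1)=g(F_2)$. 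As an orientation-reversing homeomorphism of the circle with exactly two fixed points, $R$ swaps the two open arcs of $\partial K'\setminus\{F_1,F_2\}$, so $T=R^2$ preserves each arc and is orientation-preserving on it. The analogous collinearity characterisation shows the $T$-fixed set also equals $\{F_1,F_2\}$, so on each arc $T$ is a monotone homeomorphism whose forward and backward orbits converge to $F_1,F_2$; continuity of $g$ together with $g\circ T=g$ then forces $g(y)=g(F_1)=g(F_2)$ for every $y\in\partial K'$, as required.

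The main obstacle will be squeezing enough rigidity out of the available symmetries: a single chord-partner invariance leaves far too much freedom on $\partial K'$, and even the pair $\phi_{p'},\phi_{q'}$ alone yields a composition $T$ whose rotation number could in principle be rational, admitting non-trivial closed invariant subsets beyond the reach of a purely dynamical argument. The decisive move is to use the global reflection $\sigma$ supplied by Lemma~\ref{planosimetria} and to route everything through the orientation-reversing map $R=\sigma\phi_{p'}$, whose fixed set I can locate exactly on $L(p',q')$; this reduces matters to elementary monotone-interval dynamics where continuity of $g$ alone suffices. A smaller technical check that will need care is that segment-freeness on $K$ truly descends to the sections $\Pi\cap K$ (so that each such section has exactly two shadow-boundary points) and that $L(p',q')$ really meets $\partial K'$ in only two points, the latter relying on the interiority of $p'$.
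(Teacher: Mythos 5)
Your proposal is correct and is essentially the paper's own argument in different packaging: your map $T=\phi_{q'}\circ\phi_{p'}$ on $\partial K'\cong S^1$ is exactly the paper's alternating sequence of supporting $d$-planes $\Gamma_s$ obtained by reflecting through the symmetry $(n-2)$-planes of the sections $\aff\{\Gamma_{2k},p\}\cap K$ and $\aff\{\Gamma_{2k+1},S(p)\}\cap K$, the invariance $g\circ\phi_{p'}=g\circ\phi_{q'}=g$ is the paper's observation that all contact points $\alpha_s$ remain in the fixed $2$-plane $\Xi$ orthogonal to $\Gamma$, and the convergence of orbits to the two points over the vertical chord through $p$ is the paper's limits (1)--(2). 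The only difference is that you justify that convergence carefully (locating the fixed sets of $R=\sigma\circ\phi_{p'}$ and of $T$ on $L(p',q')$ and invoking monotone dynamics on the two arcs), where the paper merely asserts it from the alternation between $\Pi^{+}$ and $\Pi^{-}$.
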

\begin{proof}
               Let $\Gamma$ be a supporting $d$-plane of $K$ parallel to 
               $H$. Let us assume that $S\partial(K, \Gamma)$ is segment 
               free shadow boundaries of $K$. We construct a sequence of 
               $d$-planes parallel to $\Gamma$, $\{\Gamma_{s}\}^{\infty}_{s=0}$, 
               and a sequence of hyperplanes $\{\Pi_{s}\}^{\infty} _{s=0}$ in 
               the following way. We put $\Gamma_0$ equal to $\Gamma$. Let 
               $\Pi_{2k}$ be the hyperplane $\aff \{\Gamma_{2k}, p\}$, $L_{2k}$ be 
               the $d$-plane of symmetry of $\Pi_{2k}\cap K$ parallel to 
               $H$ and let $\Gamma_{2k+1}$ be the image of $\Gamma_{2k}$ 
               under the 
               symmetry with respect to $L_{2k}$, $k=0,1,2,...$ Let 
               $\Pi_{2k+1}$ be the hyperplane $\aff \{\Gamma_{2k+1},S(p)\}$, $L_{2k+1}$ 
                be the $d$-plane of symmetry of $\Pi_{2k+1}\cap K$ parallel 
                to $H$ and let $\Gamma_{2k+2}$ be the image of 
                $\Gamma_{2k+1}$ under 
                the symmetry with respect to $L_{2k+1}$, $k=0,1,2,...$ (see 
                fig. \ref{sensual}).  
                
            Let $\Pi$ be the hyperplane orthogonal to $H$, parallel to 
            $\Gamma$ and passing through $p$ and let $\Pi^{+}$ and 
            $\Pi^{-}$ be the half-spaces defined by $\Pi$. It is clear that if 
            $\Gamma_{2k}\in \Pi^{+}$, then 
            $\Gamma_{2k+1}\in \Pi^{-}$. From here, we conclude that there exist two 
            $d$-plane $\Sigma$, $\Sigma' \subset\Pi$, such that $\Sigma$, $\Sigma'$ 
            are support $d$-planes of $\Pi \cap K$ and 
\begin{eqnarray}\label{alma}
                \Gamma_{2k}\rightarrow \Sigma , k \rightarrow \infty. 
\end{eqnarray}
\begin{eqnarray}\label{liz}
                \Gamma_{2k+1}\rightarrow \Sigma' , k \rightarrow \infty. 
\end{eqnarray}
             Let $\alpha_n:=\bd K \cap \Gamma_n$, $\alpha:=\bd K \cap \Sigma'$ and 
             $\beta:\bd K \cap \Gamma$.
              Let $\Delta$ and $\Xi$ be the 2-planes orthogonal 
             to $\Gamma$ and passing through $\alpha$ and $\beta$, 
             respectively. 
             By virtue that the $d$-planes $L_{2k+1}$ are parallel to 
             $\Gamma$ it follows $\alpha_{2k+1}\in \Xi$ for all $k$. By 
             (\ref{liz}), $\alpha_{2k+1} \rightarrow \alpha$ and we conclude 
             that $\alpha \in \Xi$. Thus $\Delta= \Xi$. By the arbitrariness of 
             $\Gamma$, we derived the equality 
             $S\partial (K,\Gamma)=\Delta \cap K$.
\end{proof}
\begin{figure}[H]
    \centering
     \includegraphics [width=1.\textwidth]{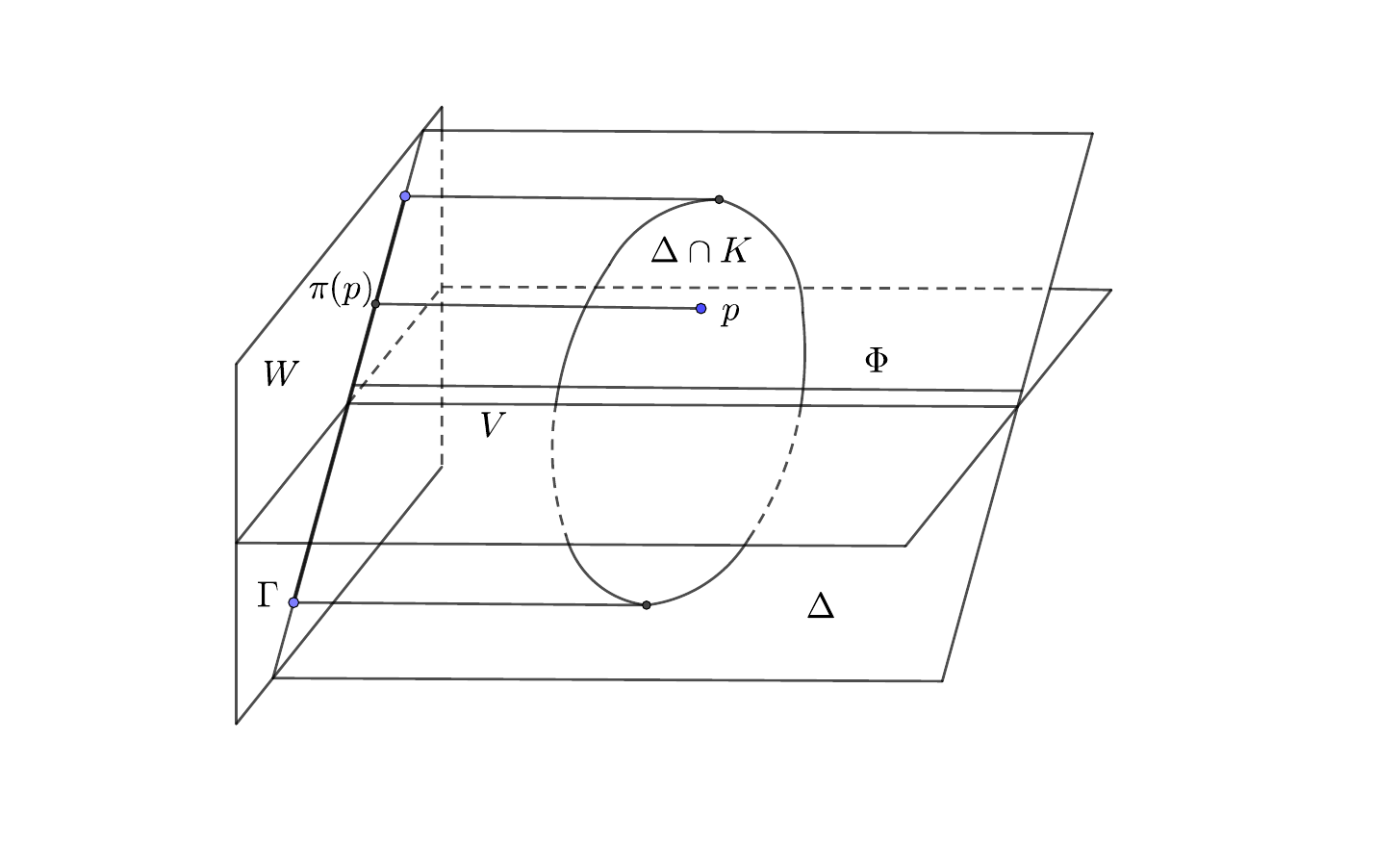}
    \caption{$\pi(K)$ is a 3-dimensional ellipsoid.}
    \label{dona}
\end{figure}
  
           
\section{Reduction of the general case of Theorem \ref{patitasricas} to dimension 3.}
              Before to show how to reduce the general case of Theorem 
              \ref{patitasricas} to dimension 3 we will prove some 
             auxiliary results
\begin{lemma}\label{suave}
               Suppose that the body $K$ has a unique diametral chord $[\alpha,\beta]$ 
               perpendicular to $H$ and, for all $d$-plane $\Gamma \subset H$, the 
               shadow boundary $S\partial(K,\Gamma)$ of $K$ corresponding to 
               $\Gamma$ is contained in a 2-plane $\Delta$, orthogonal to $\Gamma$ 
               and containing $[\alpha,\beta]$. 
               Then the body $K$ is a body of revolution with axis the line defined by 
               $[\alpha,\beta]$.
 \end{lemma}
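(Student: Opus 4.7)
The plan is to reduce the statement to a per-slice claim. Writing $L := L(\alpha,\beta)$, $H_h$ for the hyperplane parallel to $H$ at signed height $h$ along $L$, $K_h := K\cap H_h$, and $c_h := L\cap H_h$, I would show that each non-degenerate slice $K_h$ is a Euclidean ball in $H_h$ centered at $c_h$; this immediately forces $K$ to be a body of revolution with axis $L$. Observe first that for each unit vector $u\in H$, the $2$-plane $\Delta$ from the hypothesis is uniquely $\Delta_u := L + \mathbb{R}u$ (the only $2$-plane orthogonal to $\Gamma_u := u^{\perp}\cap H$ and containing $[\alpha,\beta]$), and $\Delta_u \cap H_h = c_h + \mathbb{R}u$.

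The first step is to convert the shadow-boundary hypothesis into a pointwise condition on each slice. Fix a unit $u \in H$ and let $x \in K_h$ be an extremal point in the direction $u$. The supporting hyperplane of $K_h$ within $H_h$ at $x$ perpendicular to $u$ is an $(n-2)$-plane $\Lambda \subset H_h \subset \mathbb{R}^n$ parallel to $\Gamma_u$. Since any boundary point of $K_h$ relative to $H_h$ is automatically a boundary point of $K$ in $\mathbb{R}^n$ (every $H_h$-neighborhood of such a point contains a point of $H_h \setminus K$), we have $\Lambda \cap \inte K = \emptyset$ and $x \in \Lambda\cap K$; hence $\Lambda$ is a supporting $(n-2)$-plane of $K$ parallel to $\Gamma_u$. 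The hypothesis then yields $x \in S\partial(K,\Gamma_u) \subset \Delta_u$, so $x \in \Delta_u \cap H_h = c_h + \mathbb{R}u$. Because the exposed face of $K_h$ in direction $u$ sits on this single line while $\langle\cdot,u\rangle$ is constant there, the face collapses to a single point $c_h + \rho_h(u)u$, where $\rho_h(u) := h_{K_h - c_h}(u)$ denotes the support function of the translated slice.

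The second step is a standard support-function computation. The singleton subdifferential makes $h_{K_h - c_h}$ differentiable at every unit $u$ with $\nabla h_{K_h - c_h}(u) = \rho_h(u)u$. For any unit $v \perp u$,
\begin{equation*}
\left.\frac{d}{dt}\right|_{t=0}\rho_h(\cos t\,u + \sin t\,v) = \nabla h_{K_h - c_h}(u)\cdot v = \rho_h(u)\langle u, v\rangle = 0,
\end{equation*}
so the spherical gradient of $\rho_h$ vanishes identically and $\rho_h$ is constant on the unit sphere of $H$. Therefore $K_h - c_h$ is a ball, $K_h$ is a Euclidean ball centered at $c_h$, and $K$ is a body of revolution with axis $L$; the uniqueness of the diametral chord $[\alpha,\beta]$ enters only to identify $L$ as the correct axis.

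The step I expect to wrestle with is Step 1, where the intrinsic supporting $(n-2)$-plane of the slice $K_h$ inside $H_h$ must be lifted to a supporting $(n-2)$-plane of the ambient body $K$. The lift rests on the inclusion $\bd K_h \subset \bd K$, which is clean at heights where $H_h$ meets $\inte K$; the extreme heights where $K_h$ degenerates to a single point are consistent with the \emph{ball of radius zero} conclusion. Once Step 1 is in place, Step 2 is a routine spherical-calculus exercise.
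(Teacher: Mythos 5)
Your proof is correct, and its skeleton coincides with the paper's: both reduce the lemma to showing that every section $K_h=K\cap H_h$ by a hyperplane parallel to $H$ is a ball centred at $c_h=L(\alpha,\beta)\cap H_h$, and both extract from the hypothesis the same geometric fact, namely that a supporting $(n-2)$-plane of the slice parallel to $\Gamma_u$ meets $K$ only inside $\Delta_u\cap H_h=c_h+\mathbb{R}u$. Where you genuinely diverge is in how this fact is converted into ``the slice is a ball.'' The paper runs the argument point-first: for each boundary point $x$ of the slice it takes $\Delta=\aff\{[\alpha,\beta],x\}$, obtains a support $(n-2)$-plane at $x$ orthogonal to $L(x,z)$, and then invokes, without proof, the classical characterization of the sphere by that property. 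You run it direction-first: the exposed face of $K_h$ in each direction $u$ is the singleton $c_h+\rho_h(u)u$, hence $h_{K_h-c_h}$ is differentiable with radial gradient, the tangential derivative of $\rho_h$ vanishes, and $\rho_h$ is constant. Your version buys two small things: it is self-contained (no appeal to an external characterization of the sphere), and it does not need to assume in advance that $c_h$ lies in the relative interior of the slice, since the conclusion $h_{K_h-c_h}\equiv r$ identifies $c_h$ as the centre automatically; it also records the extra information that each slice is strictly convex (singleton faces). The cost is only the short subdifferential computation, and like the paper you correctly dispose of the degenerate extreme slices by passing to the closure of the union of the nondegenerate ones.
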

  \begin{proof}
              In order to prove that $K$ is a body of revolution, we are going to prove that, 
              for every hyperplane $\Pi$ parallel to $H$, the section $\Pi \cap K$ is a 
              $d$-sphere and the locus of the centers lies in a line segment 
              perpendicular to $H$. 
              
              Let $\Pi$ be a plane parallel to $H$, $\Pi \cap \inte K\not= \emptyset$. In 
              order to prove to $ \Pi \cap K$ is a $d$-sphere we will use the following 
              well known characterization of the $(n-1)$-sphere: 
              
              \textit{Let \ $M\subset \Rn$ be a convex body and \ $z$ one of 
              its interior points. Let assume that for every boundary point 
              \ $x$ of \ $M$ \ is passing a supporting hyperplane orthogonal to 
              the line $L(x,z)$. Then \ $M$ is an $(n-1)$-sphere with center at \ $z$}
  
              Let $x$ be a boundary point in $\Pi \cap K$
              and $z:=\Pi \cap  [\alpha,\beta]$, $\Delta:=\aff\{[\alpha,\beta],x\}$. 
              Let $\Gamma$ be $d$-plane orthogonal to $\Delta$.
              By virtue of the hypothesis
\[
\Delta \cap K=S\partial(K, \Gamma).
 \]  
              Consequently, since $\Gamma$ is parallel to 
              $H$, there is a supporting $d$-plane of $\Pi \cap K$ 
              passing through $x$ and orthogonal to the 
              line $L(x,z)$. Thus $ \Pi \cap K$ is an $d$-sphere with center at 
              $z$. This completes the proof that $K$ is a body of revolution 
              with axis the line defined by $[\alpha, \beta]$.
\end{proof}
            Let us assume that $n>3$ and the Theorem \ref{patitasricas} 
            holds for dimension 3. Let $W$ be a hyperplane orthogonal to $H$ and let   
            $\pi: \Rn \rightarrow  W$ be the orthogonal projection onto $W$. 
\begin{lemma}\label{abejita}            
            The projection $\pi(K)$ is an $(n-1)$-ellipsoid.
\end{lemma}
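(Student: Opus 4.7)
I reduce to the three-dimensional case of Theorem~\ref{patitasricas} (assumed) by applying it to suitable three-dimensional orthogonal projections of $K$. Fix coordinates so that $H = \{x_1 = 0\}$ and $W = \{x_2 = 0\}$; thus $e_1 \perp H$, $e_2 \perp W$, and $e_2 \in H$. For each 3-dimensional linear subspace $U \subset W$ containing $e_1$, write $U = \mathrm{span}(e_1) \oplus U_0$ with $U_0 \subset W \cap H$ a 2-plane, and let $\pi_U \colon \Rn \to U$ be orthogonal projection. Since $U \subset W$, the identity $\pi_U = \pi_U \circ \pi$ holds, so 3-dim projections of $\pi(K)$ onto such $U$ coincide with those of $K$; also $U^\perp = \mathrm{span}(e_2) \oplus (W \cap H \ominus U_0) \subset H$.

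The key claim is that $\pi_U(K)$ satisfies the hypotheses of the 3-dim theorem at $\pi_U(p)$ relative to the 2-plane $U_0 = U \cap H$. Given a 2-plane $\Pi_U \subset U$ through $\pi_U(p)$ (transverse to $H$), the set $\Pi := \Pi_U + U^\perp$ is a hyperplane of $\Rn$ through $p$, transverse to $H$; by hypothesis $\Pi \cap K$ has an $(n-2)$-plane of symmetry $L$ parallel to $H$. Both $L$ and $\Pi \cap H$ are $(n-2)$-dimensional subspaces of $\Pi$ whose directions lie in $H$, so a dimension count forces their directions to agree, and in particular $L$'s direction contains $U^\perp$. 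Hence the reflection in $L$ inside $\Pi$ preserves the $U^\perp$-fibers of $\pi_U|_\Pi$ and descends to a reflection about the line $\pi_U(L) \subset \Pi_U$, which is parallel to $U_0$ and is a line of symmetry of $\Pi_U \cap \pi_U(K)$. Thus $\pi_U(p)$ is a Larman point of $\pi_U(K)$ with symmetry lines parallel to $U_0$; it lies off the hyperplane $U_0$ of symmetry of $\pi_U(K)$ (since $p \notin H$); and for a suitable choice of $U$, $\pi_U(p)$ is not a revolution point of $\pi_U(K)$, by selecting $U^\perp$ inside the direction of a hyperplane through $p$ witnessing that $p$ is not a revolution point of $K$.

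Applying the 3-dim theorem then yields that $\pi_U(K)$ is an ellipsoid of revolution with axis $e_1$ in $U$. As $U$ varies over 3-dim subspaces of $W$ containing $e_1$, every 3-dim projection of $\pi(K)$ onto such a $U$ is an $e_1$-axial 3-ellipsoid of revolution. From this it follows that $\pi(K)$ is an $(n-1)$-ellipsoid of revolution in $W$ with axis $e_1$: its cross-sections perpendicular to $e_1$ are Euclidean balls (their 2-dim shadows are all disks), and its 2-plane cross-sections containing the $e_1$-axis are ellipses (equalling the axial sections of the 3-dim projections). In particular $\pi(K)$ is an $(n-1)$-ellipsoid.

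The main obstacle is the central descent of the reflection in $L$ to the 3-dim projection, which hinges on the dimensional coincidence forcing $L$'s direction to contain $U^\perp$; this in turn depends on the maximal dimension $n-2$ of $L$ together with the transversality of $\Pi$ to $H$ and the hypothesis $L \parallel H$. A secondary subtlety is verifying the non-revolution property for $\pi_U(p)$: one must check that the family of hyperplanes through $p$ of the form $\Pi_U + U^\perp$ (with $U \subset W$ containing $e_1$ and $\Pi_U \subset U$) is rich enough to include a hyperplane witnessing that $p$ is not a revolution point of $K$, which should reduce to a dimensional genericity check using $p \notin H$.
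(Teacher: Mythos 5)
Your core mechanism is the same as the paper's: lift a hyperplane section of the projected body to the section of $K$ by the full preimage hyperplane $\Pi$, use the hypothesis to obtain an $(n-2)$-plane of symmetry $L$ of $\Pi\cap K$ parallel to $H$, note that transversality forces the direction of $L$ to coincide with that of $\Pi\cap H$ and hence to contain the kernel of the projection, and conclude that the reflection descends. The scaffolding differs: the paper projects once onto the $(n-1)$-dimensional $W$, shows that every hyperplane section of $\pi(K)$ through $\pi(p)$ has an $(n-3)$-plane of symmetry parallel to $W\cap H$, and then invokes the theorem one dimension down (so its reduction is really an induction on $n$ anchored at $n=3$), whereas you project onto three-dimensional subspaces $U\subset W$ containing $e_1$, apply only the $n=3$ case, and reassemble $\pi(K)$ from these shadows. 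Your route has the merit of literally using only the three-dimensional theorem, at the cost of the extra (but sound) reassembly step, since the $x_1$-slices of $\pi_U(K)$ equal the $\pi_U$-images of the $x_1$-slices of $K$.

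The genuine gap is the verification that $\pi_U(p)$ is not a revolution point of $\pi_U(K)$ --- a hypothesis of the lower-dimensional theorem that the paper's own proof silently omits, but which you rightly flag and then do not close. There are two problems. First, $U\subset W$ forces $U^{\perp}$ to contain the normal $e_2$ of $W$, so your requirement that $U^{\perp}$ lie in the direction space of a witnessing hyperplane $\Pi^{*}$ can be met only if some witness has $e_2$ in its direction space; the witnesses form an a priori arbitrary nonempty family of hyperplanes through $p$, so this is not a genericity check but a constraint that may fail for the given $W$. Second, and more fundamentally, even when $\Pi^{*}=\pi_U^{-1}(\Pi^{*}_U)$, the nonexistence of an $(n-2)$-plane of symmetry of $\Pi^{*}\cap K$ through $p$ does not descend: the projection $\Pi^{*}_U\cap\pi_U(K)$ may acquire a line of symmetry through $\pi_U(p)$ that lifts to no symmetry of the section, so $\pi_U(p)$ could be a revolution point of $\pi_U(K)$ even though $p$ is not one of $K$. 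A separate argument is needed here (and likewise in the paper's version for $\pi(p)$ and $\pi(K)$); as written, neither proof establishes this hypothesis before applying the lower-dimensional theorem.
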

\begin{proof}            
            Let $\Gamma \subset W \backslash H$ be a $(n-2)$-plane such that 
            $\pi (p) \in \Gamma$. We are going to show that there exists a 
            $(n-3)$-plane of symmetry $\Sigma$ of $\Gamma \cap \pi(K)$ parallel to 
            the $(n-2)$-plane $W\cap H$. Let $\Delta=\pi^{-1}(\Gamma)$. By 
            virtue of the hypothesis of the Theorem \ref{patitasricas}, there 
            exists an $(n-2)$-plane 
            of symmetry $\Phi$ of the section $\Delta \cap K$ parallel to 
            $H\cap \Delta$ (see fig. \ref{dona}). It 
            follows that $W$ is orthogonal to $\Phi$. Thus 
            $\Gamma \cap \pi(K) $ is symmetric with respect to 
            $\pi(\Phi)$ which is parallel to  
            $\pi(H\cap \Delta)=\pi(H)\cap \pi(\Delta)=(W\cap H)\cap \Gamma$. Hence 
            $\pi(K)$ is a $(n-1)$-dimensional ellipsoid.
\end{proof}
\begin{lemma}\label{chiquitita}
           Let $\Pi$ be an hyperplane parallel to $H$ such that 
            $\Pi\cap \inte K \not= \emptyset$. Then $\Pi\cap K$ is an 
            $(n-1)$-ellipsoid. 
\end{lemma}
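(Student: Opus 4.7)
The plan is to combine Lemma \ref{abejita} with a classical characterization of ellipsoids by projections. I would show that every orthogonal projection of $\Pi\cap K$ onto a hyperplane of $\Pi$ is an $(n-2)$-ellipsoid, and then invoke the fact that a convex body of dimension at least $3$ whose every $2$-dimensional projection is an ellipse must itself be an ellipsoid. Since $n>3$, the body $\Pi\cap K$ sits in the $(n-1)$-dimensional affine space $\Pi$ with $n-1\geq 3$, so this characterization applies.

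First I would fix an arbitrary hyperplane $W\subset\Rn$ orthogonal to $H$ and let $v$ be its unit normal; then $v$ is parallel to $H$. Because $\Pi$ is parallel to $H$, the direction $v$ is also parallel to $\Pi$, and the orthogonal projection $\pi_W:\Rn\to W$ sends $\Pi$ onto the $(n-2)$-plane $\Pi\cap W$. Its restriction to $\Pi$ coincides with the orthogonal projection of $\Pi$ onto $\Pi\cap W$ within $\Pi$ along the direction $v$.

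Next I would establish the identity $\pi_W(\Pi\cap K)=(\Pi\cap W)\cap \pi_W(K)$. The inclusion $\subset$ is immediate. For the reverse inclusion, given $w\in(\Pi\cap W)\cap \pi_W(K)$ the fiber $\pi_W^{-1}(w)$ is the line $w+\mathbb{R}v$; since $v$ is parallel to $\Pi$ and $w\in\Pi$, this entire line lies in $\Pi$, so the chord $\pi_W^{-1}(w)\cap K$ lies in $\Pi\cap K$ and projects onto $w$. By Lemma \ref{abejita}, $\pi_W(K)$ is an $(n-1)$-ellipsoid, and $\Pi\cap W$ is a hyperplane of $W$ meeting the interior of $\pi_W(K)$ (since $\Pi\cap \inte K\neq\emptyset$), so the intersection $(\Pi\cap W)\cap \pi_W(K)$ is an $(n-2)$-ellipsoid.

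Finally I would let $W$ vary over all hyperplanes orthogonal to $H$; then $v$ ranges over all unit directions parallel to $H$, equivalently all unit directions parallel to $\Pi$. Hence every orthogonal projection of $\Pi\cap K$ onto an $(n-2)$-dimensional hyperplane of $\Pi$ is an $(n-2)$-ellipsoid, and composing with further projections of these ellipsoids, every $2$-dimensional projection of $\Pi\cap K$ is an ellipse. The classical characterization of the ellipsoid by its planar projections then yields that $\Pi\cap K$ is an $(n-1)$-ellipsoid. The main technical step is the identity $\pi_W(\Pi\cap K)=(\Pi\cap W)\cap \pi_W(K)$, whose verification depends on the key observation that the parallelism of $\Pi$ with $H$ and the orthogonality of $W$ with $H$ together force the projection direction $v$ to lie in the direction space of $\Pi$, so that the fibers of $\pi_W$ sitting above points of $\Pi\cap W$ are entirely contained in $\Pi$.
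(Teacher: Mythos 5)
Your proposal is correct and follows essentially the same route as the paper: both identify the projection of $\Pi\cap K$ within $\Pi$ with the hyperplane slice $(\Pi\cap W)\cap \pi_W(K)$ of the ellipsoid furnished by Lemma \ref{abejita}, and then conclude via a characterization of ellipsoids by their lower-dimensional projections. You merely make explicit the fiber argument for the identity $\pi_W(\Pi\cap K)=(\Pi\cap W)\cap \pi_W(K)$, which the paper states without proof.
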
            
\begin{proof}   
           Let $\Gamma \subset \Pi$ be a $d$-plane and
             let $W$ be a hyperplane orthogonal to $\Pi$, $\Gamma \subset W$. 
           By virtue of the Lemma \ref{abejita}, $\pi(K)$ is a $(n-1)$-ellipsoid. 
             Thus  $\Gamma \cap \pi(K) $ is an $d$-ellipsoid. Therefore the 
             orthogonal projection $\pi(\Pi\cap K)=     \Gamma \cap \pi(K) $ of 
             $\Pi\cap K$ onto a $d$-plane is an ellipsoid. According to \cite{burton}, 
             $\Pi\cap K$ is an $(n-1)$-ellipsoid.
\end{proof} 

\subsection{Reduction of Theorem \ref{patitasricas}}              
              Let us assume that $n>3$ and the Theorem \ref{patitasricas} 
            holds for dimension 3. By Lemma \ref{abejita}, for every hyperplane $W$ orthogonal to $H$, the 
              projection $\pi(K)$ is an $(n-1)$-ellipsoid. 
              On the other hand, by Lemma \ref{chiquitita}, for hyperplane $\Pi$ parallel 
              to $H$ such that 
            $\Pi\cap \inte K \not= \emptyset$, the section $\Pi\cap K$ is an 
            $(n-1)$-ellipsoid. From this two facts it follows that $K$ is strictly convex. 
            Suppose that there exists a line segment $I$ in the boundary of $K$. If $I$ is 
            parallel to $H$, then it would exist a section of $K$, with a hyperplane parallel 
            to $H$ and containing $I$, otherwise, it would exist a projection of $K$ onto a 
            hyperplane perpendicular to $H$, and, in both cases would be a line segment 
            in the boundary of some ellipsoid which is absurd.
               
            By virtue of the strictly convexity of $K$, on the one hand, there is only one 
            diametral chord of $K$ perpendicular to $H$, which will be denote by 
            $[\alpha,\beta]$ and we choose the notation such $\alpha\in H_1$, and 
            $\beta\in H_2$ and, on the other hand, for every $d$-plane 
            $\Gamma\subset H$, 
            the shadow boundary $S\partial (K, \Gamma)$ is segment free. 
             By Lemma  \ref{yosiledoy}, for every $d$-plane $\Gamma \subset H$, 
             there exists a 2-plane $\Delta$ such that $\Delta \perp \Gamma$, 
             $[\alpha,\beta] \subset \Delta$ and 
              $S\partial(K,\Gamma) = \Delta \cap K$. 
              By Lemma \ref{suave}, the body $K$ is a body of revolution with axis the line 
              defined by $[\alpha,\beta]$.
             
             Since the body $K$ is a body of revolution, for every line 
             $\mathcal{L}\subset H$ it follows that 
             $S\partial(K, \mathcal{L}) =  \mathcal{L}^{\perp} \cap K$, where 
             $\mathcal{L}^{\perp}$ is the hyperplane perpendicular to $\mathcal{L}$. Thus 
             $\mathcal{L}^{\perp} \cap K$ is equal to the projection of $K$ parallel to 
             $\mathcal{L}$ which is an ellipsoid by Lemma \ref{abejita}. Consequently 
             $K$ is an ellipsoid of revolution with axis the line 
              defined by $[\alpha,\beta]$. 
\section{Proof of Theorem \ref{patitasricas} for $n=3$} 
              Before to prove Theorem \ref{patitasricas} we will prove some 
             auxiliary results
             \begin{lemma}\label{alexia}
              Let $H_1$ and $H_2$ be the support hyperplanes of $K$ 
             parallel to $H$. Then 
             \[
             \dim (H_i \cap \bd K)< 2, \textrm{ } \textrm{ } i=1,2.
             \] 
      \end{lemma}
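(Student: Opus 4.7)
The plan is to argue by contradiction. Assume $F_1 := H_1 \cap K$ is $2$-dimensional; by the symmetry $S$ from Lemma \ref{planosimetria}, $F_2 := H_2 \cap K = S(F_1)$ is also $2$-dimensional. Choose coordinates so that $H_1 = \{z=1\}$, $H_2 = \{z=-1\}$, and $H' = \{z=0\}$. Since $p \notin H'$, after possibly replacing $p$ by $S(p)$ (which, as noted just before Lemma \ref{planosimetria}, is also a Larman point whose sections have hyperplanes of symmetry parallel to $H$) I may assume that the third coordinate $p_3$ of $p$ is strictly positive.

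The main tool is the central projection $\pi_p : H_1 \to H_2$ sending $q$ to the intersection of $L(p,q)$ with $H_2$. Because $p$ lies strictly between $H_1$ and $H_2$ and strictly closer to $H_1$, after identifying $H_1$ with $H_2$ by orthogonal projection onto $H$ the map $\pi_p$ is a homothety of ratio $-\mu$ centred at the foot of $p$, where $\mu = (1+p_3)/(1-p_3) > 1$. For every plane $\Pi$ through $p$ that is not parallel to $H$ one has $\Pi \cap H_2 = \pi_p(\Pi \cap H_1)$.

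The central step is to produce a plane $\Pi$ through $p$ such that $\Pi \cap F_1$ is a non-degenerate chord $C_1$ while $\Pi \cap F_2$ is a single point. Since $\mu > 1$, the set $\pi_p(F_1)$ has diameter $\mu \cdot \mathrm{diam}(F_1)$, strictly larger than $\mathrm{diam}(F_2)$, so $\pi_p(F_1) \not\subset F_2$; hence there exists $q_0$ in the relative interior of $F_1$ with $\pi_p(q_0) \notin F_2$. From the exterior point $\pi_p(q_0)$ one can draw a line $\ell_2 \subset H_2$ that is tangent to the planar convex body $F_2$ at a single point $t$. The pull-back $\ell_1 := \pi_p^{-1}(\ell_2)$ passes through $q_0$, which lies in the relative interior of $F_1$, so $\ell_1$ meets $F_1$ in a non-degenerate chord $C_1$. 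Taking $\Pi := \aff(\ell_1,p)$ yields $\Pi \cap F_1 = C_1$ and $\Pi \cap F_2 = \{t\}$.

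The contradiction is then immediate from the horizontal symmetry hypothesis. In the section $\Pi \cap K$ the top level $z = 1$ is occupied by the segment $C_1$, while the bottom level $z = -1$ reduces to the single point $t$. The horizontal line of symmetry $\ell_\Pi$ provided by the hypothesis forces its reflection $R_\Pi$ in $\Pi$ to interchange the top and bottom $z$-levels of $\Pi \cap K$; but $R_\Pi$ is an isometry and cannot carry a non-degenerate segment to a single point. The analogous argument applied to $S(p)$ (playing for $F_2$ the role that $p$ played for $F_1$) handles the case of $H_2$. The main obstacle I foresee is the production of the tangent configuration, and specifically the check that the pull-back of a tangent line to $F_2$ from $\pi_p(q_0)$ actually cuts the relative interior of $F_1$; this is exactly where the strict inequality $\mu > 1$ is used, i.e., where one uses that $p$ is genuinely off the symmetry plane $H'$.
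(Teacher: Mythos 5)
Your proof is correct in substance but follows a genuinely different route from the paper's. The paper fixes a point $x$ in the relative interior of the putative $2$-dimensional cap $H_1\cap\bd K$, rotates a pencil of planes $\Pi$ about the line $L(p,x)$, and reflects each chord $\Pi\cap(H_1\cap\bd K)$ in the horizontal symmetry line of $\Pi\cap K$; this produces a one-parameter family of segments in $\bd K$, parallel to $H$ and contained in neither $H_1$ nor $H_2$, whose directions form a set of positive $1$-dimensional Hausdorff measure on $\mathbb{S}^{2}$, contradicting Theorem 2 of \cite{laro}. You instead exploit the global symmetry plane $H'$ from Lemma \ref{planosimetria} (so that $F_2=S(F_1)$ is congruent to $F_1$) together with the hypothesis $p\notin H'$: the central projection $\pi_p$ is a homothety of ratio $-\mu$ with $\mu>1$, the diameter comparison yields a point of $\rint F_1$ projecting outside $F_2$, and a single well-chosen section through $p$ then has a nondegenerate top face and a one-point bottom face, which no horizontal line of symmetry of that section can interchange (your observation that the symmetry must swap the extreme $z$-levels is sound). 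Your argument is more elementary, avoiding the Larman--Rogers theorem entirely, but it leans on the plane of symmetry and on $p\notin H'$, neither of which the paper's proof of this particular lemma invokes. One small point remains to be closed: from the exterior point $\pi_p(q_0)$ a support line of $F_2$ need not touch $F_2$ in a single point, since both tangents from that point could contain edges of $F_2$; however, such exceptional exterior points lie in the (at most countable) set of pairwise intersections of the edge-lines of $F_2$, while the admissible $q_0$ form a nonempty open subset of $\rint F_1$, so a generic choice of $q_0$ settles it. By contrast, the worry you flag about the pull-back line meeting $\rint F_1$ is vacuous, because $\ell_1=\pi_p^{-1}(\ell_2)$ contains $q_0\in\rint F_1$ by construction.
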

             \begin{proof}
              On the contrary, let us assume that 
              $\dim (H_1 \cap \bd K)=2$. 
             Let $x\in \rint (H_1 \cap \bd K)$ and let $\Pi$ be a plane 
             containing the line $L(p,x)$. By hypothesis there exists a  
              line of symmetry $\Gamma$ of $\Pi \cap K$ parallel to $H$. 
             We denote by $\Omega'$ the image of 
             $\Omega:=\Pi \cap (H_1\cap \bd K)$ under reflection in $\Gamma$. We 
             can choose $x$ in such a way that line $L(p,x)$ is 
             not orthogonal to $H$. Thus $\Gamma \not= \Pi \cap H$. From 
             here it follows that $\Omega$ does not belong to $H_2$. Since 
             $\dim (H_1 \cap \bd K)=2$, we conclude that 
             $\dim \Omega=1$. Hence 
             $\Omega'\subset \bd K \backslash H_2$ has dimension $1$. 
             Varying $\Pi$, assuming that $L(p,x)\subset \Pi$, we construct 
             a collection $\Theta$ of line segments contained in $\bd K$, 
             parallel to $H$ and not contained neither $H_1$ nor $H_2$ 
             such that the set of points of the unit sphere 
             $\mathbb{S}^{2}$ representing the directions of those 
             segments in $\Theta$ has no zero $1$-dimensional Hausdorff 
             measure but this contradict Theorem 2 of \cite{laro}.
                 \end{proof} 
            Let $S$ be the set of directions such that $v$ is in $S$ if there exist a line 
            segment $I$ contained in $\bd K$ and parallel to $H$. 
\begin{lemma}\label{aroma}
            The set $S$ is an empty set.
\end{lemma}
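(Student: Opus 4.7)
The plan is to argue by contradiction: assume there is a segment $I\subset \bd K$ in some direction $v\in H$, and to produce a family of segments in $\bd K$ whose directions fill an arc of positive $1$-dimensional Hausdorff measure in $\mathbb{S}^{2}$, contradicting Theorem~2 of \cite{laro} in the same way it is used in the proof of Lemma \ref{alexia}.

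First I show $I\not\subset H$. If $I$ lay in $H$, the planar section $\Pi_{0}\cap K$ with $\Pi_{0}=\aff(p,I)$ would admit two distinct parallel lines of symmetry: the line $\Pi_{0}\cap H$ (arising from the $H$-symmetry of $K$ established in Lemma \ref{planosimetria}) and the Larman line $L_{0}$, both parallel to $v$. A compact planar convex body cannot possess two parallel lines of symmetry, since the composition of the corresponding two reflections is a nontrivial translation. Hence $I$ lies strictly on one side of $H$; by the symmetry $S$, the segment $S(I)\subset \bd K$ is parallel to $v$ on the opposite side.

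Next I form $\Pi_{0}=\aff(p,I)$ with its Larman line $L_{0}\parallel v$, and let $I'=R_{L_{0}}(I)\subset \bd K\cap \Pi_{0}$. Fix $x\in \rint I$ and consider the pencil of planes $\Psi$ through the line $L(p,x)$. Only $\Pi_{0}$ in this pencil contains all of $I$; for every other $\Psi$ we have $\Psi\cap I=\{x\}$, and the line $\Psi\cap H$ sweeps every direction in $H\cap \mathbb{S}^{2}$ as $\Psi$ rotates around $L(p,x)$. The Larman line $L_{\Psi}\subset \Psi$ is parallel to $\Psi\cap H$, and the reflection $R_{L_{\Psi}}$ inside $\Psi$ sends $x$ to a boundary point $x_{\Psi}\in \bd K$. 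When $K$ has a $2$-dimensional face $F$ through $I$ (whose supporting plane is tilted, since by Lemma \ref{alexia} it can be neither $H_{1}$ nor $H_{2}$), the intersection $F\cap \Psi$ is a segment through $x$ in $\Psi$, and its image $R_{L_{\Psi}}(F\cap\Psi)$ is a segment of $\bd(\Psi\cap K)\subset \bd K$ through $x_{\Psi}$. As $\Psi$ rotates, the directions of these reflected segments trace out a $1$-parameter arc in $\mathbb{S}^{2}$ of positive $1$-dimensional Hausdorff measure, contradicting \cite[Theorem~2]{laro}.

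The main obstacle is the remaining case in which $I$ is an isolated $1$-dimensional face of $K$, i.e., the unique supporting plane $T$ at $\rint I$ satisfies $T\cap K=I$: here the reflection only sends $x$ to a smooth boundary point $x_{\Psi}$ of $\bd(\Psi\cap K)$, and no new segment appears directly. To treat this case I would iterate the Larman reflections alternately through $p$ and through $S(p)$, in the spirit of the construction in Lemma \ref{yosiledoy}, producing a sequence $I_{s}\subset \bd K$ of parallel segments in direction $v$ whose affine spans converge to supporting lines of $K$. Then, combining the $H$-symmetry $S$ with the chord construction $[x,R_{L_{0}}(x)]\subset \Pi_{0}\cap K$ linking $I$ to $I'$ and applying the Larman property at a boundary point of one of the new segments $I_{s}$, I expect to extract a further $1$-parameter family of horizontal segments with varying directions, again producing a positive-measure arc in $\mathbb{S}^{2}$ and contradicting \cite[Theorem~2]{laro}.
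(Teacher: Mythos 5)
Your proposal takes a different route from the paper and, as written, does not close. The paper's proof is not a direction-counting argument at its core: for a horizontal segment $[x,y]$ lying in a plane $\Pi$ parallel to $H$ with $\Pi\neq H_1,H_2$, it uses the smallness of $S$ only to choose a horizontal direction $u\neq v$ with $S\partial(K,u)$ segment free; by Lemma \ref{yosiledoy} this shadow boundary is a planar section $\Delta\cap K$, $\Delta$ is arranged to meet $\rint [x,y]$ at a point $z$, and then $z$ admits a supporting line of $\Pi\cap K$ parallel to $u$, which is impossible because the unique supporting line of $\Pi\cap K$ at a relative interior point of the edge $[x,y]$ is the edge line itself, parallel to $v$. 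The residual cases $[x,y]\subset H_1$ or $H_2$ are handled by reflecting the segment off $H_i$ when $\aff\{p,[x,y]\}\not\perp H$, and otherwise by the iterated construction of Lemma \ref{yosiledoy}, which produces a second segment in $H_1\cap\bd K$ and contradicts Lemma \ref{alexia}. None of these mechanisms appears in your proposal.

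Three of your steps have genuine gaps. First, the exclusion of $I\subset H$: the reflection of $\Rt$ in $H$ maps $\Pi_0=\aff(p,I)$ to itself only when $\Pi_0\perp H$, so $\Pi_0\cap H$ is in general not a line of symmetry of $\Pi_0\cap K$, and your ``two parallel axes'' contradiction evaporates (moreover, even when it is a line of symmetry, it could coincide with $L_0$). Second, in the case of a $2$-face $F$, a one-parameter arc of segment directions of positive $\mathcal{H}^1$-measure does not contradict Theorem 2 of \cite{laro}: that theorem asserts $\sigma$-finiteness of the linear measure of the set of segment directions, and the face $F$ by itself already contributes a full arc of segment directions (its chords), as does any cone over a disc; so the reflected segments $R_{L_{\Psi}}(F\cap\Psi)$ buy you nothing, and you also never verify that their directions actually vary. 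Third, the case you yourself flag as the main obstacle --- $I$ an isolated edge with $T\cap K=I$ --- is precisely the case that must be ruled out, and for it you offer only intentions (``I would iterate\dots'', ``I expect to extract\dots''), not an argument. The lemma is therefore not established by the proposal.
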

 \begin{figure}[H]
    \centering
\includegraphics [width=5.5in] {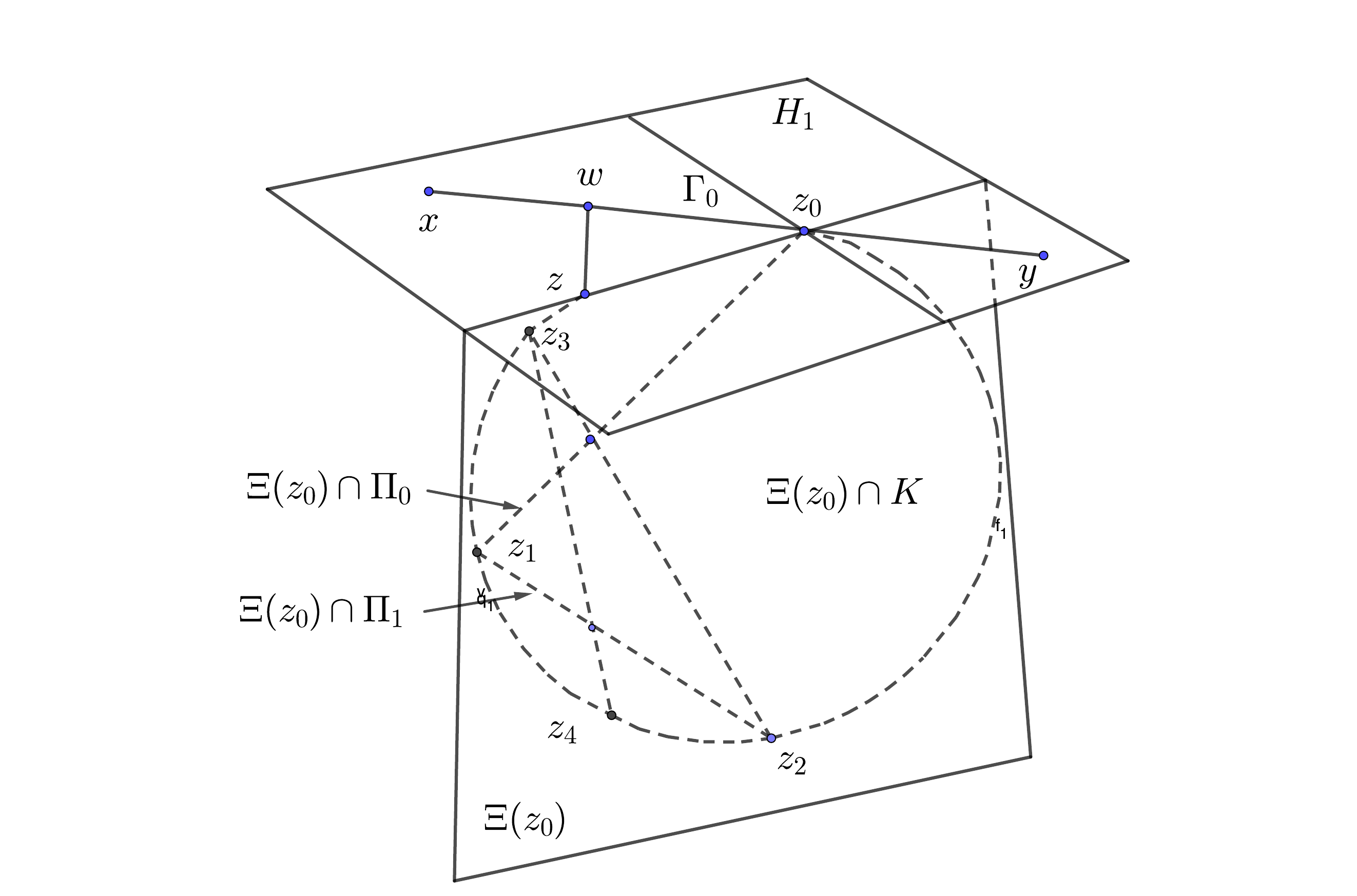}
5\caption{There is no a line segment in $\bd K$ parallel to $H$.}
    \label{super}
 \end{figure}                    
\begin{proof}            
            By Theorem 2 of \cite{laro}, the set $S$ has zero $1$-dimensional Hausdorff 
            measure. On the other 
            hand, notice that, by Lemma \ref{alexia},  
\begin{eqnarray}\label{suculenta}             
               \dim (H_i\cap  \bd K)\leq1,
\end{eqnarray}                
             $i=1,2$, there is at most a line segments in $H_i\cap  \bd K$, $i=1,2$. 
 
             Let $v\subset S$, $[x,y]\subset \bd K$ be a line segment parallel to $v$  
            and let $\Pi$ be a plane containing $[x,y]$ and parallel to $H$. 
            First we consider the case $\Pi \not=H_i$, $i=1,2$. Since $\Pi$ is not a 
            support plane of $K$ it is clear that $\Pi \cap \inte K\not=\emptyset$. Since 
            the set $S$ has zero $1$-dimensional Hausdorff measure, there exist a 
            direction $u$, $u\not=v$, such that $S\partial (K, u)$ is 
            segment free and the plane $\Delta$ which satisfies, according Lemma 
             \ref{yosiledoy}, the relations 
             \[
             u\perp \Delta, \textrm{ }\textrm{ } \textrm{ }\Delta \cap K=S\partial (K, u)  
             \]
             has the property            
    \[
       \Delta \cap \inte [x,y]\not=\emptyset.
     \]
            Hence there exist a support line $L_z$ of $\Pi \cap K$, passing through 
            $z:=\Delta \cap \inte [x,y]$, and parallel to $u$, however, since 
            $z\in \inte [x,y]$, 
            there is only one supporting line of $\Pi \cap K$ passing through $z $ and it is 
            parallel to $v$. Therefore $[x,y] \subset L_z$ and $u=v$ but this contradicts 
            our choice of $u$.  
            
            Now we suppose that $\Pi =H_i$, say $\Pi =H_1$. Let $W_0:=\aff\{p,[x,y]\}$. 
            If $W_0 $ is not perpendicular to $H$, then we repeat the argument above 
            for the line segment $[x',y']$ where $x'$ and $y'$ are images of the reflection 
            in $W_0$ with respect to the line of reflection parallel to $H$ given by the 
            hypothesis (notice that $[x',y']$ is not contained neither $H_1$ nor $H_2$). 
             
            Finally, we consider the case $W_0 \perp H$. Let 
            $z_0\in \inte [x,y]$ with $z_0\not= w:=L\cap H_1$ and let 
            $\Gamma_0 \subset H_1$ 
            be a support 
            line of $K$ passing through $z_0$ and not containing $[x,y]$, 
            let $\Pi_0:=\aff\{p,\Gamma_0\}$ and let $M$ be the plane perpendicular to 
            $H$, parallel to $\Gamma_0$ and containing the line $L$. Notice, since 
            $z_0\not= w$, $\Pi_0\not= M$. By the 
            same argument used in the 
            proof of Lemma \ref{yosiledoy}, we construct a sequence 
            $\{\Gamma_{s}\}^{\infty}_{s=0}$ of support lines of $K$ parallel to 
            $\Gamma_0$, a sequence of hyperplanes $\{\Pi_{s}\}^{\infty} _{s=0}$ and a 
            sequence of point $\{z_{s}\}^{\infty} _{s=0}$ such that 
            $z_s:=\Gamma_s \cap (\Pi_s \cap K)$. 
            Then we conclude that there exist two  points $z\in H_2$,  
            $z'\in H_1$, such that  
\begin{eqnarray}\label{perla}
                z_{2k}\rightarrow z , k \rightarrow \infty. 
\end{eqnarray}
\begin{eqnarray}\label{negra}
                z_{2k+1}\rightarrow  z' , k \rightarrow \infty. 
\end{eqnarray}
            We denote by $\Xi(z_0)$ the planes orthogonal to $\Gamma_0$ and passing 
            through $z_0$. By the construction it is clear that $z_s\in \Xi(z_0)$, for all 
            $s$, and $z\in \Xi(z_0)$ (see fig. \ref{super}). By virtue that the body $K$ is 
            closed, it follows that 
            $z\in (\Xi(z_0)\cap \bd K)$. Since 
            $z_0\not= w$, we conclude that $z\not= w$. Furthermore, since 
            $\Pi_0\not= M$, it follows that  $z\not=z_0$. Thus the line 
            segment $[w,z]$, on the one hand, is contained in $\bd K$ (by the 
            convexity of $K$) and, on the other hand, $[w,z]\not=[x,y]$. Thus 
            $\dim (H_1\cap K)=2$. However this contradicts  (\ref{suculenta}) (here 
            we assume that $w\in [x,y]$ but, otherwise, we take a point $w'\in [x,y]$, 
            $w'\not =z_0$, and we get to the same conclusion). 
            
            All this considerations drive us to conclude that  $S=\emptyset$. 
 \end{proof}
 \subsection{Proof of Theorem \ref{patitasricas}.} 
             By Lemma \ref{aroma}, there is only one diametral chord of $K$ 
             perpendicular to $H$. We denote by $[\alpha,\beta]$ the diametral chord of 
             $K$ perpendicular to $H$ and we choose the notation such $\alpha\in H_1$ 
             and $\beta\in H_2$. On the other hand, by Lemma 
             \ref{aroma}, for every line  $\Gamma\subset H$, the shadow boundary 
             $S\partial (K, \Gamma)$ is segment free. By Lemma  \ref{yosiledoy}, for 
             every line $\Gamma \subset H$, 
             there exists a plane $\Delta$ such that $\Delta \perp \Gamma$, 
             $[\alpha,\beta] \subset \Delta$ and 
              $S\partial(K,\Gamma) = \Delta \cap K$. 
              By Lemma \ref{suave}, the body $K$ is a body of revolution.  
                  
              Since $K$ is a body of revolution with axis the line defined by 
              $[\alpha, \beta]$, every section of $K$, passing through $p$, has two 
              orthogonal lines of symmetry. Let $W$ be a plane, 
              $p\in W$. There exists a plane of symmetry of $K$, say 
              $\Pi$, orthogonal to $W$ and containing 
              $[\alpha,\beta]$. Hence $\Pi \cap  W$ is a line of symmetry of 
              $K\cap  W$. On the other hand,  $\Pi \cap  W$ is 
              orthogonal to the line of symmetry of $K\cap  W$ parallel 
              to $H$, given by the hypothesis. Thus, all the section of $K$ 
              passing through $p$ are centrally symmetric and, by virtue of 
              the False Centre Theorem $K$ is an ellipsoid.

\end{document}